\definecolor{Red}{cmyk}{0, 0.7808, 0.4429, 0.1412}
\newtheorem{theorem}{Theorem}[section]
\newtheorem{corollary}{Corollary}
\newtheorem*{theorem*}{Theorem}
\newtheorem*{remark*}{Remark}
\newtheorem*{problem*}{Problem}
\newtheorem*{conjecture*}{Conjecture}
\newtheorem*{question*}{Question}
\newtheorem{lemma}[theorem]{Lemma}
\newtheorem{definition}[theorem]{Definition}
\newcommand{\rom}[1]{\uppercase\expandafter{\romannumeral #1\relax}}
\begin{document}

\title[Large values of $L$-functions on $1$-line]{Large values of $L$-functions on $1$-line}

\author[Anup B. Dixit]{Anup B. Dixit}
\address{Department of Mathematics and Statistics\\ Queen's University\\ Jeffrey Hall, 48 University Ave\\ Kingston\\ Canada, ON\\ K7L 3N8}
\email{anup.dixit@queensu.ca}

\author[Kamalaskhya Mahatab]{Kamalakshya Mahatab}
\address{Kamalakshya Mahatab, Department of Mathematics and Statistics, University of Helsinki, P. O. Box 68, FIN
00014 Helsinki, Finland}
\email{accessing.infinity@gmail.com, \. kamalakshya.mahatab@helsinki.fi}

\thanks{ABD is supported by the Coleman postdoctoral fellowship of Queen's university}
\thanks{KM is supported by Grant 227768 of the Research Council of Norway and Project 1309940 of Finnish Academy.} 

\date{}

\begin{abstract} 
In this paper, we study lower bounds of a general family of $L$-functions on the
$1$-line. More precisely, we show that for any $F(s)$ in this family, there exists arbitrary large $t$
such that $F(1+it)\geq e^{\gamma_F} (\log_2 t + \log_3 t)^m + O(1)$, where $m$ is the order of the pole of $F(s)$ at
$s=1$. This is a generalization of the same result of Aistleitner, Munsch and the second author for the Riemann zeta-function. As a consequence, we get lower bounds for large values of Dedekind zeta-functions and Rankin-Selberg $L$-functions of the type $L(s,f\times f)$ on the $1$-line.
\end{abstract}

\subjclass[2010]{11M41}

\keywords{Dedekind zeta function, values on 1-line}

\maketitle
\section{\bf Introduction}
The growth of the Riemann zeta-function $\zeta(s)$ in the critical strip $1/2<\Re(s)<1$ has been of interest to number theorists for a long time. In this context, the upper bound is predicted by the Lindel\"of hypothesis, which claims that $|\zeta(\sigma+it)| \ll |t|^{\epsilon}$ for any $\epsilon >0$ and $1/2 < \sigma<1$. This is, in fact a consequence of the famous Riemann hypothesis. Although there is significant progress towards this problem, no unconditional proof is known (see \cite{Titchmarsh} for more details). \\

A more intricate question is to investigate how large $|\zeta(\sigma+it)|$ can be for a fixed $\sigma \in [1/2,1)$ and $t\in [T,T+H]$, some interval. In this direction,  Balasubramanian and Ramachandra \cite{BaluRamch} showed that 
\begin{equation*}
    \max_{t\in [T,T+H]} \left|\zeta\left(\frac{1}{2}+it\right)\right| \geq \exp\left( c  \sqrt{\frac{\log H}{\log_2 H}}\right),
\end{equation*}
where $c$ is a positive constant, $H\ll \log_2 T$ and $\log_2 T$ denotes $\log\log T$. From now on, we will denote $\underbrace{\log\log\ldots\log T}_{k\text{ times }}$ by $\log_k T$. This result was improved by Bondarenko and Seip \cite{BondSeip} in a larger interval and was later optimized by Bretche and Tenenbaum \cite{BreTen}, who showed that
\begin{equation*}
    \max_{t\in [0,T]} \left|\zeta\left(\frac{1}{2}+it\right)\right| \geq \exp\left( (\sqrt 2 + o(1))  \sqrt{\frac{\log T\log_3 T}{\log_2 T}}\right).
\end{equation*}
For $\sigma \in (1/2, 1)$ and $c_\sigma=0.18(2\sigma-1)^{1-\sigma}$, Aistleitner \cite{aist} proved that 
\begin{equation*}
    \max_{t\in [0,T]} \left|\zeta\left(\frac{1}{2}+it\right)\right| \geq \exp\left( c_\sigma  \sqrt{\frac{\log T}{\log_2 T}}\right).
\end{equation*}

On the other hand, we expect much finer results for large values of $L$-functions on $\Re(s)=1$. In \cite{Sound}, Granville and Soundararajan used techniques of diophantine approximation to show that
\begin{equation*}
    \max_{t\in[0,T]} |\zeta(1+it)| \geq e^{\gamma} \bigg(\log_2 T + \log_3 T - \log_4 T + O(1)\bigg)
\end{equation*}
for arbitrarily large $T$. This is an improvement on the previous bounds given by Levinson \cite{Lev}. Granville and Soundararajan \cite{Sound} conjectured that
\begin{equation}\label{sound-conjecture}
    \max_{t\in[T,2T]} |\zeta(1+it)| = e^{\gamma} (\log_2 T + \log_3 T + C_1 + o(1)),
\end{equation}
where $C_1$ is an explicitly computable constant. \\

In 2017, Aistleitner, Munsch and the second author \cite{Chris} used the resonance method to prove that there is a constant $C$ such that
\begin{equation}\label{zeta-large_values}
    \max_{t\in[\sqrt{T},T]} |\zeta(1+it)| \geq e^{\gamma} (\log_2 T + \log_3 T + C).
\end{equation}
Note that this result essentially matches \eqref{sound-conjecture}, however, the size of the interval here is much larger. Unfortunately, over shorter intervals $[T,T+H]$, very little seems to be known regarding large values of $\zeta(1+it)$ (see \cite{Maksym}, \cite{ouimet} for further details).\\


In this paper, we generalize \eqref{zeta-large_values} to a large class of $L$-functions, namely $\mathbb{G}$, which conjecturally contains the Selberg class $\mathbb{S}$. We establish \eqref{zeta-large_values} for elements in $\mathbb{G}$ with non-negative Dirichlet coefficients. The key difference between $\mathbb{G}$ and $\mathbb{S}$ is that elements in $\mathbb{G}$ satisfy a polynomial Euler-product which is a more restrictive condition than that in $\mathbb{S}$. However, the functional equation in $\mathbb{S}$ is replaced by a weaker ``growth condition" in $\mathbb{G}$. This is a significant generalization because most Euler products, which have an analytic continuation exhibit a growth condition, but perhaps not a functional equation. As applications, we prove the analogue of \eqref{zeta-large_values} for Dedekind zeta-functions $\zeta_K(s)$ and Rankin-Selberg $L$-functions given by $L(s,f\times f)$. We also prove a generalized Merten's theorem for $\mathbb{G}$ as a precursor to the proof of our main theorem.\\

The resonance method with a similar resonator was used by Aistleitner, Munsch, Peyrot and the second author \cite{Chris3} to establish large values of Dirichlet $L$-functions $L(s,\chi)$ with a given conductor $q$ at $s=1$. Perhaps, a similar method can also be used to establish large values over more general orthogonal families of $L$-functions in $\mathbb{G}$.\\

\subsection{\bf The class $\mathbb{G}$}
In 1989, Selberg \cite{Slb} introduced a class of $L$-functions $\mathbb{S}$, which is expected to encapsulate all naturally occurring $L$-functions arising from arithmetic and geometry.
\begin{definition}[The Selberg class]
The Selberg class $\mathbb{S}$ consists of meromorphic functions $F(s)$ satisfying the following properties.
\begin{enumerate}
    \item[(i)] {\bf Dirichlet series} - It can be expressed as a Dirichlet series
    \begin{equation*}
         F(s) = \sum_{n=1}^{\infty} \frac{a_F(n)}{n^s},
    \end{equation*}
    which is absolutely convergent in the region $\Re(s)>1$. We also normalize the leading coefficient as $a_F(1)=1$.
    
    \item[(ii)] {\bf Analytic continuation} - There exists a non-negative integer $k$, such that $(s-1)^k F(s)$ is an entire function of finite order.
    
    \item[(iii)] {\bf Functional equation} - There exist real numbers $Q>0$ and $\alpha_i\geq 0$, complex numbers $\beta_i$ and $w\in\mathbb{C}$, with $\Re(\beta_i) \geq 0$ and $|w| =1$, such that
    \begin{equation}\label{fneq}
        \Phi(s) := Q^s \prod_i \Gamma(\alpha_i s + \beta_i) F(s)
    \end{equation}
    satisfies the functional equation
    \begin{equation*}
        \Phi(s) = w \overline{\Phi}(1-\overline{s}).
    \end{equation*}

    \item[(iv)] {\bf Euler product} - There is an Euler product of the form
    \begin{equation}\label{eprod}
        F(s) = \prod_{p \text{ prime}} F_p(s),
    \end{equation}
    where
    \begin{equation*}
        \log F_p(s) = \sum_{k=1}^\infty \frac{b_{p^k}}{p^{ks}}
    \end{equation*}
    with $b_{p^k} = O(p^{k \theta})$ for some $\theta < 1/2$.
    
    \item[(v)] {\bf Ramanujan hypothesis} - For any $\epsilon >0$, 
    \begin{equation}\label{rhyp}
        |a_F(n)| = O_\epsilon(n^\epsilon).
    \end{equation}
\end{enumerate}
\end{definition}

The Euler product implies that the coefficients $a_F(n)$ are multiplicative, i.e., $a_F(mn) = a_F(m) a_F(n)$ when $(m,n)=1$. Moreover, each Euler factor also has a Dirichlet series representation
\begin{equation*}
    F_p(s) = \sum_{k=0}^{\infty}\frac{a_F(p^k)}{p^{ks}},
\end{equation*}
which is absolutely convergent on $\Re(s)>0$ and non-vanishing on $\Re(s)> \theta$, where $\theta$ is as in $(iv)$.\\

For the purpose of this paper, we need a stronger Euler-product to ensure that the Euler factors factorize completely and further require a zero free region near $1$-line, similar to what we notice in the proof of  prime number theorem. However, we can replace the functional equation with a weaker condition on the growth of $L$-functions on vertical lines. This leads to the definition of the class $\mathbb{G}$.
\begin{definition}[The class $\mathbb{G}$]\label{def:S_ha}
The class $\mathbb{G}$ consists of meromorphic functions $F(s)$ satisfying $\textit{(i), (ii)}$ as in the above definition and further satisfies
\begin{enumerate}
    \item[(a)] {\bf Complete Euler product decomposition} - The Euler product in \eqref{eprod} factorizes completely, i.e.,
    \begin{equation}\label{complete factorization}
        F(s) := \prod_p \prod_{j=1}^k \left(1 - \frac{\alpha_j(p)}{p^s}\right)^{-1}
    \end{equation}
    with $|\alpha_j|\leq 1$ and $\Re(s)>1$.
    \item[(b)] {\bf Zero-free region} - There exists a positive constant $c_F$, depending on $F$, such that $F(s)$ has no zeros in the region
    \begin{equation*}
        \Re(s) \geq 1 - \frac{c_F}{\log (|\Im(s)| + 2)},
    \end{equation*}
    except the possible Siegel-zero of $F(s)$.
    \item[(c)] {\bf Growth condition} - For $s=\sigma +it$, define
    \begin{equation*}
        \mu_F^*(\sigma) := \inf\{\lambda>0 : |F(s)| \ll (|t|+2)^{\lambda} \}.
    \end{equation*}
    Then, 
    \begin{equation*}
        \frac{\mu_F^*(\sigma)}{1-2\sigma}
    \end{equation*}
    is bounded for $\sigma<0$.
    
\end{enumerate}
\end{definition}

One expects $\mathbb{S}$ to satisfy conditions $(a)$ and $(b)$. In fact, the Riemann zeta-function, the Dirichlet $L$-functions, the Dedekind zeta-functions and the Rankin-Selberg $L$-functions can be all shown to satisfy conditions $(a)$ and $(b)$. Furthermore, for elements in $\mathbb{S}$ the growth condition $(c)$ is a consequence of the functional equation \eqref{fneq}. However, it is possible to have $L$-functions not obeying a functional equation to satisfy the growth condition. One can consider linear combination of elements in $\mathbb{S}$ to see this. A family of $L$-functions based on growth condition was introduced by V. K. Murty in \cite{Km} and the reader may refer to \cite{anup-thesis} for more details on this family. Also the Igusa zeta-function, and the zeta function of groups have Euler products but may not have functional equation, which is discussed in \cite{Saut}.

\subsection{\bf The Main Theorem}
In this paper, we produce a lower bound for large values of $L$-functions in $\mathbb{G}$ on the $1$-line. For a meromorphic function $F(s)$ having a pole of order $m$ at $s=1$, define
\begin{equation}\label{c_-m(F)}
    c_{-m}(F) = \lim_{s\to 1}\, (s-1)^m F(s).
\end{equation}
\begin{theorem}\label{main-theorem}
Let $F\in\mathbb{G}$ have non-negative Dirichlet coefficients $a_F(n)$ and a pole of order $m$ at $s=1$. Then, there exists a constant $C_F>0$ depending on $F(s)$ such that
\begin{equation*}
    \max_{t\in[\sqrt{T},T]} |F(1+it)| \geq e^{\gamma_F} ((\log_2 T + \log_3 T)^m - C_F),
\end{equation*}
where $\gamma_F=  m \gamma + \log c_{-m} (F)$ and $\gamma$ is the Euler-Mascheroni constant.
\end{theorem}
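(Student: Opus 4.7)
The plan is to adapt the resonance method of \cite{Chris} from $\zeta(s)$ to $F\in\mathbb{G}$, with the growth condition in place of the functional equation. Set $N=T^{1/2-\epsilon}$ and introduce a resonator
$$R(t)=\sum_{n\in\mathcal{M}} r(n)\,n^{-it},$$
where $r$ is a suitable multiplicative function and $\mathcal{M}$ is a set of integers of size at most $N$, supported on squarefrees with prime factors in a carefully chosen range $[P_1,P_2]$. For a smooth weight $\Phi$ supported on a subinterval of $[T^{-1/2},1]$, so that $\Phi(t/T)$ lives in $[\sqrt T,T]$, consider
$$\mathcal{R}(T):=\frac{\int_{\mathbb{R}} F(1+it)\,|R(t)|^2\,\Phi(t/T)\,dt}{\int_{\mathbb{R}}|R(t)|^2\,\Phi(t/T)\,dt}.$$
Since $|\mathcal{R}(T)|\leq \max_{t\in[\sqrt T,T]}|F(1+it)|$, it suffices to prove $\mathcal{R}(T)\geq e^{\gamma_F}\bigl((\log_2 T+\log_3 T)^m-C_F\bigr)$.

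To evaluate the numerator, approximate $F(1+it)$ by its truncated Dirichlet series $\sum_{n\leq M} a_F(n)/n^{1+it}$ with $M\leq T^{1/2-\epsilon}$. This is achieved via Perron's formula and a contour shift: the growth condition ensures the horizontal pieces are manageable, while the pole of order $m$ at $s=1$ is handled through the Laurent expansion $F(s)=c_{-m}(F)/(s-1)^m+\dots$. Substituting and expanding $|R(t)|^2$, the diagonal contribution (triples $(a,b,c)$ with $ab=c$) factorizes by the multiplicativity of $a_F$ and $r$ into an Euler product
$$\prod_p\left(\sum_{j,k\geq 0}\frac{a_F(p^k)\,r(p^j)\,\overline{r(p^{j+k})}}{p^k}\right)\bigg/\prod_p\left(\sum_{j\geq 0}|r(p^j)|^2\right).$$
The complete factorization property of $F\in\mathbb{G}$ lets the local factors be optimized one prime at a time: choosing $r(p)$ as the unique solution of the resulting local maximization problem drives each factor toward $F_p(1)$. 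The generalized Mertens theorem for $\mathbb{G}$ proved earlier in the paper then gives $\prod_{p\leq P_2} F_p(1)=e^{\gamma_F}(\log P_2)^m(1+o(1))$, and choosing $P_2\asymp\log T\cdot\log_2 T$ produces the main term $e^{\gamma_F}(\log_2 T+\log_3 T)^m$.

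The off-diagonal triples ($ab\neq c$) contribute an error controlled by the rapid decay of $\widehat\Phi$: one has $|\log(ab/c)|\gg 1/N^2$, so $T\widehat{\Phi}(T\log(ab/c))$ is superpolynomially small whenever $N\leq T^{1/2-\epsilon}$, and a divisor-type count over the admissible triples closes the estimate. The main obstacle is carrying out the Perron truncation and the off-diagonal analysis using only the polynomial growth bound (c) rather than a functional equation, and verifying that it is strong enough to make all the requisite uniform estimates go through. A secondary obstacle is to combine the polar contribution at $s=1$ with the generalized Mertens theorem cleanly enough to obtain the sharp constant $e^{\gamma_F}=c_{-m}(F)\,e^{m\gamma}$ rather than a weaker one; this is why the Mertens estimate is separated out as a precursor.
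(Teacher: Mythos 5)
Your proposal departs from the paper's proof in two substantive ways, and the second of them is a genuine gap.

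For the approximation step, the paper does not truncate the Dirichlet series of $F$. It proves in Lemma~\ref{approximation-lemma} that $F(1+it)$ is well-approximated by the \emph{truncated Euler product} $F(1+it;Y)=\prod_{p\leq Y}\prod_{j}\bigl(1-\alpha_j(p)p^{-1-it}\bigr)^{-1}$ with $Y=\exp((\log T)^{10})$, by applying Perron to $\log F$ and shifting the contour into the zero-free region furnished by condition~(b) of $\mathbb{G}$. This is structurally important: the diagonal quantity that emerges at the end is $\sum_k a_kq_k=\prod_{p\leq X}\prod_j(1-\alpha_j(p)q_p/p)^{-1}$, which is exactly what the generalized Mertens theorem evaluates. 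Your Dirichlet-series truncation, followed by an ad hoc reassembly into an Euler product, is conceivable in principle, but you never invoke condition~(b), which is precisely what makes the clean Euler-product approximation available and is one of the defining features of $\mathbb{G}$.

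The genuine gap is in the resonator and the treatment of off-diagonal terms. You propose a finite Soundararajan-type resonator supported on squarefree integers $\leq N=T^{1/2-\epsilon}$ with prime factors in $[P_1,P_2]$, and you control off-diagonals via the decay of $\widehat\Phi$. The paper, following Aistleitner--Mahatab--Munsch, instead takes the \emph{infinite} completely multiplicative resonator $R(t)=\prod_{p\leq X}(1-q_pp^{it})^{-1}$ with $q_p=1-p/X$ and $X=\tfrac16(\log T)(\log_2 T)$, and does not bound off-diagonal terms at all: it \emph{discards} them, using that $\widehat\Phi\geq 0$, $a_F(n)\geq 0$ and $q_n\geq 0$ make every cross term $a_kq_mq_n\int k^{-it}(m/n)^{it}\Phi\,dt$ non-negative. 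Restricting $m$ to multiples of $k$ then yields $I_1/I_2\geq\sum_k a_kq_k$ with no constraint on the cardinality or the height of the resonator's support; the only size input is $|R(0)|^2\leq T^{1/3+o(1)}$, which controls the contribution from $|t|\leq\sqrt T$. This is exactly what permits $X\asymp(\log T)(\log_2 T)$, so that $\log X=\log_2T+\log_3T+O(1)$ and the exact constant appears. In your scheme, the hard constraint $n\leq T^{1/2-\epsilon}$ is incompatible with keeping the full squarefree product set over $[P_1,P_2]$ (whose maximal element is of size $\approx e^{P_2}\gg T$), and you never address how the Euler-product main term survives the necessary truncation. That is not a technicality: avoiding precisely this obstruction is the point of the positivity trick, and without it (or a substitute) I do not see how your sketch reaches $(\log_2T+\log_3T)^m$ rather than the weaker $(\log_2T)^m$ that classical resonance gives.
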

In the above theorem, since $a_F(n)\geq 0$, we clearly have $m\geq 1$. This is important because if $F$ has no pole at $s=1$, it is possible for $F(s)$ to grow very slowly on the 1-line. \\

As an immediate corollary, we get the following result for Dedekind zeta-functions $\zeta_K(s)$. Let $K/\mathbb{Q}$ be a number field. The Dedekind zeta-function $\zeta_K(s)$ is defined on $\Re(s)>1$ as
\begin{equation*}
    \zeta_K(s) := \sum_{\textbf{0}\neq \mathfrak{a} \subseteq \mathcal{O}_K} \frac{1}{(\mathbb{N} \mathfrak{a})^s }  = \prod_{\mathfrak{p}} \left(1-\frac{1}{(\mathbb{N} \mathfrak{p})^s} \right)^{-1},
\end{equation*}
where $\mathfrak{a}$ runs over all non-zero integral ideals and $\mathfrak{p}$ runs over all non-zero prime ideals of $K$. The function $\zeta_K(s)$ has an analytic continuation to the complex plane except for a simple pole at $s=1$. Furthermore, $\zeta_K$ satisfies properties $\textit{(a)}, \textit{(b)}, \textit{(c)}$ and therefore $\zeta_K \in \mathbb{G}$. Thus, by Theorem \ref{main-theorem}, we have

\begin{corollary}\label{large-value-dedekind-zeta}
For a number field $K$, there exists a constant $C_K>0$ depending on $K$ such that
\begin{equation*}
    \max_{t\in [\sqrt{T}, T]} |\zeta_K(1+it)| \geq e^{\gamma_K} (\log_2 T + \log_3 T - C_K),
\end{equation*}
where $\gamma_K = \gamma + \log \rho_K$, with $\rho_K$ being the residue of $\zeta_K(s)$ at $s=1$.
\end{corollary}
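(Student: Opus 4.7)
The plan is to derive Corollary~\ref{large-value-dedekind-zeta} as a direct specialization of Theorem~\ref{main-theorem}. Concretely, I will verify that $\zeta_K\in\mathbb{G}$, has non-negative Dirichlet coefficients, and a simple pole at $s=1$ with leading Laurent coefficient $c_{-1}(\zeta_K)=\rho_K$; then the theorem applies with $m=1$, giving $\gamma_K=\gamma+\log\rho_K$ exactly as stated.

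The non-negativity $a_K(n)=\#\{\mathfrak{a}\subseteq\mathcal{O}_K:\mathbb{N}\mathfrak{a}=n\}\geq 0$ is immediate, and Hecke's theorem supplies properties (i) and (ii) with $m=1$. For the complete Euler product (a), I would use the splitting data $p\mathcal{O}_K=\prod_i \mathfrak{p}_i^{e_i}$ with inertia degrees $f_i$, so the local factor at $p$ equals $\prod_i (1-p^{-f_is})^{-1}$. The cyclotomic identity $1-X^{f}=\prod_{\zeta^{f}=1}(1-\zeta X)$ splits this into $\sum_i f_i\leq n:=[K:\mathbb{Q}]$ linear factors $(1-\alpha_j(p)p^{-s})^{-1}$ with $|\alpha_j(p)|=1$; padding with $\alpha_j(p)=0$ at ramified primes produces a uniform slot-count $k=n$ as demanded by \eqref{complete factorization}. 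For (b), I invoke Landau's classical zero-free region for Dedekind zeta functions, which has precisely the shape $\Re(s)\geq 1-c_K/\log(|t|+2)$ (the possible Siegel zero being explicitly permitted in the definition of $\mathbb{G}$). For (c), the functional equation for $\zeta_K$ combined with the Phragm\'en--Lindel\"of convexity principle yields $\mu_K^*(\sigma)\leq n(1/2-\sigma)$ for $\sigma<0$, so $\mu_K^*(\sigma)/(1-2\sigma)\leq n/2$ is bounded.

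With $\zeta_K\in\mathbb{G}$ in hand, Theorem~\ref{main-theorem} applied with $m=1$ yields the stated inequality immediately. The main obstacle is essentially absent here: all analytic ingredients (meromorphic continuation, functional equation, Landau's zero-free region, convexity bound) are classical for $\zeta_K$, and the only minor bookkeeping is the padding trick needed to present the local factors with a uniform number of slots as required by condition (a) of Definition~\ref{def:S_ha}.
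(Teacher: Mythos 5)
Your proposal matches the paper's argument exactly: the paper also derives the corollary by observing that $\zeta_K$ has non-negative Dirichlet coefficients, a simple pole at $s=1$ with residue $\rho_K$, and lies in $\mathbb{G}$ (asserting properties (a), (b), (c) without further detail), and then invokes Theorem~\ref{main-theorem} with $m=1$. The only difference is that you spell out the verifications of (a) via the splitting of the cyclotomic factorization, (b) via Landau's zero-free region, and (c) via the functional equation and Phragm\'en--Lindel\"of, which the paper leaves implicit.
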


The $L$-function associated to the Rankin-Selberg convolution of any two holomorphic newforms $f$ and $g$, denoted by $L(s,f\times g)$, is in the Selberg class. Moreover, it can also be shown that $L(s, f \times g) \in \mathbb{G}$. Here $f$ and $g$ are normalized Hecke eigenforms of weight $k$. It is known that if $L(s, f\times g)$ has a pole at $s=1$, then $f=g$. Hence, from Theorem \ref{main-theorem}, we have the following.

\begin{corollary}\label{large-value-rankin-selberg}
For a normalized Hecke eigenform $f$, there exists a constant $C_f>0$ such that
\begin{equation*}
    \max_{t\in [\sqrt{T}, T]} |L(1+it, f\times f)| \geq e^{\gamma_f} (\log_2 T + \log_3 T - C_f),
\end{equation*}
where $\gamma_f = \gamma + \log \rho_f$, with $\rho_f$ being the residue of $L(s, f\times f)$ at $s=1$.
\end{corollary}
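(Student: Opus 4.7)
The plan is to deduce the corollary directly from Theorem \ref{main-theorem} by verifying that $F(s)=L(s,f\times f)$ lies in the class $\mathbb{G}$, has non-negative Dirichlet coefficients, and has a simple pole at $s=1$. These facts give $m=1$, $c_{-1}(F)=\rho_f$, and $\gamma_F = 1\cdot\gamma + \log\rho_f = \gamma_f$, so Theorem \ref{main-theorem} delivers exactly the asserted inequality with constant $C_f := C_F$.

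By Deligne's theorem, the Satake parameters $\{\alpha(p),\beta(p)\}$ of $f$ at unramified primes $p$ satisfy $\alpha(p)\beta(p)=1$ and $|\alpha(p)|=|\beta(p)|=1$, so the Rankin-Selberg $L$-function admits the complete Euler factorization
\begin{equation*}
L(s,f\times f)=\prod_p \prod_{i,j=1}^{2}\left(1-\frac{\alpha_i(p)\overline{\alpha_j(p)}}{p^s}\right)^{-1},\qquad \Re(s)>1,
\end{equation*}
with four linear factors per prime, each of modulus at most $1$; this verifies properties (i) and (a) of Definition \ref{def:S_ha}. The classical Rankin-Selberg analysis furnishes meromorphic continuation to $\mathbb{C}$ with the only pole a simple one at $s=1$ (residue $\rho_f>0$) together with a functional equation of degree $4$, giving (ii) with $m=1$. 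Non-negativity of $a_F(n)$ follows since the logarithmic local coefficients
\begin{equation*}
b_{p^k} = \frac{1}{k}\sum_{i,j=1}^{2}\bigl(\alpha_i(p)\overline{\alpha_j(p)}\bigr)^k = \frac{1}{k}\bigl|\alpha(p)^k+\beta(p)^k\bigr|^2
\end{equation*}
are non-negative, so that by exponentiation of the local logarithm and by multiplicativity, $a_F(n)\geq 0$ for all $n$.

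It remains to verify (b) and (c). For (b), Moreno's classical zero-free region for Rankin-Selberg $L$-functions provides a constant $c_f>0$ with $L(s,f\times f)\neq 0$ in $\Re(s)\geq 1-c_f/\log(|\Im(s)|+2)$ apart from $s=1$. For (c), the degree-$4$ functional equation, combined with Stirling's formula applied to the four archimedean gamma factors and with $|L(1-s,f\times f)|=O(1)$ for $\Re(s)<0$ (by absolute convergence), gives
\begin{equation*}
|L(\sigma+it,f\times f)|\ll (|t|+2)^{2(1-2\sigma)}
\end{equation*}
for $\sigma<0$, and hence $\mu_F^*(\sigma)/(1-2\sigma)$ is bounded (by $2$) as $\sigma\to-\infty$. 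Theorem \ref{main-theorem} applied with $F=L(\cdot,f\times f)$ and $m=1$ now yields the corollary. The only step of genuine substance is (b), for which we simply cite Moreno; the remaining items are routine consequences of the standard Rankin-Selberg theory.
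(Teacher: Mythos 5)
Your proposal is correct and follows exactly the route the paper intends: the paper simply asserts that $L(s,f\times f)\in\mathbb{G}$ with a simple pole at $s=1$ and invokes Theorem \ref{main-theorem}, whereas you supply the (standard and correctly executed) verifications of conditions (a), (b), (c) and the non-negativity of the coefficients. No discrepancy with the paper's argument.
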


The result obtained in Theorem \ref{main-theorem} is a refined version of the bound established by Aistleitner-Pa\'{n}kowski \cite{Chris2}, which states that if $F(s)$ is in the Selberg class and satisfies the prime number theorem, namely,
\begin{equation*}
    \sum_{p\leq x} |a_F(p)| = \kappa \, \frac{x}{\log x} + \mathcal{O}\left(\frac{x}{\log^2 x}\right),
\end{equation*}
then for large $T$,
\begin{equation}\label{chris-lukasz}
    \max_{t\in [T, 2T]} |F(1+it)| = \Omega\left(\left(\log\log T\right)^{\kappa}\right).
\end{equation}

Furthermore, since we are assuming the zero-free region in $\mathbb{G}$, using \cite[Theorem 1]{Perelli}, we have $\kappa = m$. Hence, we get a slightly more refined result than \eqref{chris-lukasz}, but on a larger interval $[\sqrt{T},T]$.\\
 
 The poles of any element $F$ in the Selberg class $\mathbb{S}$ are expected to arise from the Riemann zeta-function. More precisely, if $F(s)$ has a pole of order $m$ at $s=1$, then $F(s)/\zeta(s)^m$ is expected to be entire and in $\mathbb{S}$. Thus, it is not surprising to expect the lower bound in Theorem \ref{main-theorem} to be of the order $(\log \log T)^m$.\\
 
 It is possible to generalize Theorem~\ref{main-theorem} to the Beurling zeta-function \cite{beurling} by constructing a suitable resonator over Beurling numbers instead of integers. However, this will carry us far afield from our current focus. Hence, we relegate it to future research.
 
 \section{\bf Mertens' theorem for the class $\mathbb{G}$}
 In 1874, Mertens \cite{mertens} proved the following estimate for truncated Euler-product of $\zeta(s)$, which is also known as Mertens' third theorem given by
 \begin{equation*}
     \prod_{p\leq x} \left(1-\frac{1}{p}\right)^{-1} = e^{\gamma} \log x + O(1),
 \end{equation*}
 where $\gamma$ denotes the Euler-Mascheroni constant. The analogue of Mertens' theorem for number fields was proved by Rosen \cite{Rosen}, who showed that
 \begin{equation*}
     \prod_{\mathbb{N}\mathfrak{P}< x} \left(1-\frac{1}{\mathbb{N}\mathfrak{P}}\right)^{-1} = \rho_K e^{\gamma} \log x + O(1),
 \end{equation*}
 where $\rho_K$ denotes the residue of $\zeta_K(s)$ at $s=1$. The Mertens theorem for the extended Selberg class satisfying conditions $(a)$ and $(b)$ was proved by Yashiro \cite{yashiro} in 2013. Following similar approach, one can establish Mertens' theorem for $\mathbb{G}$, where we replace the functional equation by the growth condition. However, Yashiro's paper \cite{yashiro} seems to be available only on arXiv. Hence, we include the proof for the sake of completeness.
 
 \begin{theorem}\label{generalized-mertens-thm}
 Let $F(s)\in\mathbb{G}$. Suppose that $F(s)$ has a pole of order $m$ at $s=1$ and $c_{-m}(F)$ be as in \eqref{c_-m(F)}. Then, for a constant $0< C_F \leq 1$,
 \begin{equation*}
     \prod_{p\leq x} \prod_{j=1}^{k} \left( 1-\frac{\alpha_j(p)}{p}\right)^{-1} = c_{-m}(F) e^{\gamma m} (\log x)^m \left(1+O\left(e^{-C_F \sqrt{\log x}}\right)\right).
 \end{equation*}
 \end{theorem}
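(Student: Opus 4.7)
The plan is to take logarithms and reduce the left-hand side to a prime sum controlled by a prime number theorem for $F$ with de la Vallée Poussin-type error term. Expanding $-\log(1-\alpha_j(p)/p)$ as a power series and using $a_F(p) = \sum_j \alpha_j(p)$,
$$\log\prod_{p\leq x}\prod_{j=1}^{k}\left(1-\frac{\alpha_j(p)}{p}\right)^{-1} = \sum_{p\leq x}\frac{a_F(p)}{p} + \sum_{p\leq x}\sum_{j=1}^{k}\sum_{r\geq 2}\frac{\alpha_j(p)^{r}}{r p^{r}}.$$
Since $|\alpha_j(p)|\leq 1$, the $r\geq 2$ triple sum over all primes converges absolutely to a constant $B_F$, with tail $O(1/x)$ beyond $x$. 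Hence, after exponentiating, the theorem reduces to an asymptotic of the shape
$$\sum_{p\leq x}\frac{a_F(p)}{p} = m\log\log x + \bigl(\gamma m + \log c_{-m}(F) - B_F\bigr) + O\bigl(e^{-c_F\sqrt{\log x}}\bigr).$$

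The technical heart, and the main obstacle, is a prime number theorem for $F$ with explicit error. Writing $-F'(s)/F(s) = \sum_n \Lambda_F(n)/n^s$, a direct computation from the complete factorization gives $\Lambda_F(p^r) = (\log p)\sum_j \alpha_j(p)^{r}$, so $|\Lambda_F(n)|\leq k\Lambda(n)$ and prime-power terms with $r\geq 2$ contribute only $O(\sqrt{x})$ to $\psi_F(x):=\sum_{n\leq x}\Lambda_F(n)$. I would prove
$$\psi_F(x) = mx + O\bigl(x e^{-c_F\sqrt{\log x}}\bigr)$$
by Perron's formula applied to $-F'/F$, shifting the contour past the boundary of the zero-free region. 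The pole of order $m$ at $s=1$ produces a simple pole of $-F'/F$ with residue $m$, yielding the main term. The standard Borel--Carathéodory/Hadamard three-circle argument combined with the zero-free region in (b) gives $|F'(s)/F(s)|\ll \log^{2}(|t|+2)$ on the shifted contour $\Re(s)=1-c/\log(|t|+2)$. Here the growth condition (c) is what replaces the functional equation used in Yashiro's proof for the extended Selberg class: together with Phragmén--Lindelöf convexity, it forces polynomial growth of $F$ on vertical lines inside the strip $0\leq \Re(s)\leq 1$, which is exactly what controls the horizontal legs of the contour integral. Balancing the truncation height with the contour depth produces the $\sqrt{\log x}$ saving.

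Stripping prime powers from $\psi_F$ yields $\sum_{p\leq x}a_F(p)\log p = mx + O(x e^{-c\sqrt{\log x}})$, and partial summation with kernel $1/(t\log t)$ gives $\sum_{p\leq x}a_F(p)/p = m\log\log x + \kappa_F + O(e^{-c\sqrt{\log x}})$ for some constant $\kappa_F$. To identify $\kappa_F$, I would compare Dirichlet series on $\Re(s)>1$: both $\log F(s) + m\log(s-1)$ and $m[\log\zeta(s) + \log(s-1)]$ extend continuously to $s=1$ with values $\log c_{-m}(F)$ and $0$ respectively. Subtracting, letting $s\to 1^{+}$, and invoking the classical Mertens theorem for $\zeta$ (which supplies the Euler constant $\gamma$) pins down $\kappa_F + B_F = \gamma m + \log c_{-m}(F)$. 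Exponentiating the reduced asymptotic then produces the stated product formula with the correct multiplicative constant and error term.
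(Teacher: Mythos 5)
Your proposal is correct but follows a genuinely different route from the paper. The paper (following Yashiro) applies Perron's formula directly to the Dirichlet series of $\log F(1+s)$ with coefficients $\Lambda_F(n)$ normalized so that $\log F(s)=\sum_n\Lambda_F(n)n^{-s}$, then shifts the contour into the zero-free region; the crucial constant $\gamma m+\log c_{-m}(F)$ is extracted by computing the residue at $s=0$ explicitly, splitting $\log F(1+s)=-m\log s+\log\bigl(s^mF(1+s)\bigr)$ and evaluating $\int_{\mathcal C}\frac{x^s}{s}\log s\,ds$ around a small circle via the integral representation of Euler's constant. You instead first prove a de la Vall\'ee Poussin--type PNT $\psi_F(x)=mx+O\bigl(xe^{-c\sqrt{\log x}}\bigr)$ by applying Perron to $-F'/F$, then pass to $\sum_{p\le x}a_F(p)/p$ by partial summation, and finally pin down the additive constant by comparing the Dirichlet series of $\log F(s)-m\log\zeta(s)$ near $s=1$ with the classical Mertens theorem for $\zeta$. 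Both routes use the same structural inputs ((a) controls coefficients, (b) supplies the zero-free region, (c) plus Phragm\'en--Lindel\"of and Borel--Carath\'eodory controls the contour). The paper's method is self-contained and avoids ever mentioning $\psi_F$ or invoking Mertens for $\zeta$; yours is more modular and arguably conceptually cleaner, at the price of two extra steps (partial summation and an Abelian-limit argument to relate the constant $\kappa_F$ to $\log c_{-m}(F)$), the latter of which you state but would need to justify carefully since the Dirichlet series of $\log F(s)-m\log\zeta(s)$ has abscissa of absolute convergence $1$ and you are evaluating a limit at the boundary.
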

 \begin{proof}
 We closely follow the method of Yashiro \cite{yashiro}. Denote by
 \begin{equation*}
     F(1;x) := \prod_{p\leq x} \prod_{j=1}^{k} \left( 1-\frac{\alpha_j(p)}{p}\right)^{-1}.
 \end{equation*}
 Let 
 \begin{equation*}
     \log F(s) = \sum_{n=1}^{\infty} \frac{b_F(n)}{n^s}.
 \end{equation*}
 By the complete Euler product \eqref{complete factorization}, we have $b_F(n)=0$ if $n\neq p^r$ and $b_F(n) \ll n^{\theta}$ for some $\theta<1/2$. Since
 \begin{equation*}
     b_F(p^r) = \frac{1}{r} \sum_{j=1}^k \alpha_j(p)^r,
 \end{equation*}
 we have $|b_F(p^r)| \leq k$. Write
 \begin{align}\label{Fx(1)-defn}
     \log F(1;x)  & = \sum_{p\leq x} \sum_{r=1}^{\infty} \frac{b_F(p^r)}{p^r}\nonumber \\
      &= \sum_{n\leq x} \frac{b_F(n)}{n} + \sum_{\sqrt{x} < p \leq x}  \sum_{p^r > x} \frac{b_F(p^r)}{p^r} + \sum_{p\leq \sqrt{x}} \sum_{p^r > x} \frac{b_F(p^r)}{p^r}.
 \end{align}
It is easy to estimate the second and third term above as follows.
 \begin{align*}
     \sum_{\sqrt{x} < p \leq x}  \sum_{p^r > x} \frac{b_F(p^r)}{p^r}  \ll \sum_{\sqrt{x} < p \leq x}  \sum_{r=2}^{\infty} \frac{1}{p^r} \ll \sum_{\sqrt{x} < p \leq x} \frac{1}{p^2} \ll \frac{1}{\sqrt{x}}.
 \end{align*}
 Also,
 \begin{align*}
     \sum_{p\leq \sqrt{x}} \sum_{p^r > x} \frac{b_F(p^r)}{p^r} & \ll \sum_{p\leq \sqrt{x}} \frac{1}{x} \ll \frac{1}{\sqrt{x}}.
 \end{align*}
From \eqref{Fx(1)-defn}, we get
 \begin{equation*}
     \log F(1;x) = \sum_{n\leq x} \frac{b_F(n)}{n} + O\left(\frac{1}{\sqrt{x}}\right).
 \end{equation*}
 Setting $(1/\log x)=u$ and $ e^{\sqrt{\log x}}=T$ and using Perron's formula, we get
 \begin{equation*}
     \sum_{n\leq x} \frac{b_F(n)}{n} = \frac{1}{2\pi i} \int_{u-iT}^{u+iT} \frac{x^s}{s}\, \log F(1+s) \, ds + O\left(e^{-c_F\sqrt{\log x}}\right).
 \end{equation*}

Let $u' = C_F / \log T = C_F / \sqrt{\log x}$. Choosing $x$ sufficiently large, we can ensure that there are no Siegel zeros for $F(1+s)$ in the region $[-u', u]$. Hence from the condition $(b)$, $F(1+s)$ has no zeros in the region $-u' \leq \Re(s) \leq u$ and $|\Im(s)| \leq T$ and has a pole of order $m$ at $s=0$.\\

Consider the contour $C$ joining $u-iT, -u'-iT, -u'+iT$ and $u+iT$. By the residue theorem, we have
\begin{equation}\label{residue_at_0}
    \text{Res}_{s=0} \left(\frac{x^s}{s} \log F(1+s)\right) = \frac{1}{2\pi i} \int_C \frac{x^s}{s} \log F(1+s) \, ds.
\end{equation}
We now estimate the above integral. Suppose $s=\sigma +it$. By the growth condition $(c)$, we have
\begin{equation*}
    |F(s)| \ll |t|^{\mu_F(\sigma)},
\end{equation*}
where $\mu(\sigma) \ll (1-2\sigma)$. Thus, for our choice of $u$ and $u'$, we get for $\sigma \in [-u',u]$
\begin{equation*}
    \log F(1+\sigma + iT) \ll (\log T)^2. 
\end{equation*}
Hence, we have
\begin{align}\label{estimate1}
    \left | \frac{1}{2\pi i} \int_{u+iT}^{-u'+iT} \frac{x^s}{s} \log F(1+s) \, ds \right | & \ll \left| \frac{(\log T)^2}{T} \int_{u}^{-u'} x^{\sigma} \, d\sigma \right |\nonumber\\
    & \ll (\log x) e^{-\sqrt{\log x}}\nonumber\\
    & \ll e^{-c_F' \sqrt{\log x}},
\end{align}
for some $0< c_F'< 1$. Similarly, we also get
\begin{equation}\label{estimate2}
     \left | \frac{1}{2\pi i} \int_{-u'+iT}^{u+iT} \frac{x^s}{s} \log F(1+s) \, ds \right | \ll e^{-c_F' \sqrt{\log x}}.
\end{equation}

We use the following result due to Landau (see \cite[p. 170, Lemma 6.3]{Vaughan}) to esimate the other terms in \eqref{residue_at_0}.
\begin{lemma}\label{zeros-lemma-titchmarsh}
Let $f(z)$ be an analytic function in the region containing the disc $|z|\leq 1$, supposing $|f(z)|\leq M$ for $|z|\leq 1$ and $f(0)\neq 0$. Fix r and R such that $0<r<R<1$. Then, for $|z|\leq r$ we have
\begin{equation*}
    \frac{f'}{f}(z) = \sum_{|\rho|\leq R} \frac{1}{z-\rho} + O\left(\log \frac{M}{|f(0)|}\right),
\end{equation*}
where $\rho$ is a zero of $f(s)$.
\end{lemma}
\medskip

Let $f(z) = (z+1/2+it)^m F(1+z+(1/2+it))$, $R=5/6$ and $r=2/3$ in the above Lemma \ref{zeros-lemma-titchmarsh}. Using the zero-free region $(b)$, we get
\begin{equation*}
    \left|\log s^m F(1+s)\right| \ll \left\{
	\begin{array}{ll}
		\log(|t| + 4),  & |t|\geq 7/8 \mbox{ and } \sigma \geq -u', \\
		1 & |t|\leq 7/8 \mbox{ and } \sigma \geq -u'.
	\end{array}
\right.
\end{equation*}
We now have the estimate
\begin{align}\label{estimate3}
    \left| \int_{-u'}^{-u'+iT}  \frac{x^s}{s} \log F(1+s) \, ds \right| & \ll \int_0^T \frac{x^{-u'}}{|s|} ( |\log s^m| + |\log s^m F(1+s)| ) \, dt\nonumber\\
    & \ll e^{-c_F'' \sqrt{\log x}},
\end{align}
for some $0<c_F'' <1$. Similarly, we also have
\begin{equation}\label{estimate4}
    \left| \int_{-u'-iT}^{-u'}  \frac{x^s}{s} \log F(1+s) \, ds \right| \ll e^{-c_F'' \sqrt{\log x}}.
\end{equation}
Using the estimates \eqref{estimate1}, \eqref{estimate2}, \eqref{estimate3} and \eqref{estimate4} in the Equation \eqref{residue_at_0} and choosing $C_F = \min(c_F,c_F',c_F'')$, we get
\begin{equation*}
    \frac{1}{2\pi i} \int_{u-iT}^{u+iT}  \frac{x^s}{s} \log F(1+s) \, ds = \text{Res}_{s=0} \left(\frac{x^s}{s} \log F(1+s)\right) + O\left(e^{-C_F \sqrt{\log x}}\right)
\end{equation*}
Let $\mathcal{C}$ denote the circle of radius $u'$ centered at $0$. Then,
\begin{equation*}
\frac{1}{2\pi i} \int_{\mathcal{C}} \frac{x^s}{s} \log F(1+s) \, ds = \text{Res}_{s=0} \left(\frac{x^s}{s} \log F(1+s)\right).
\end{equation*}
Hence, it suffices to estimate the above integral. Since $F(s)$ has a pole of order $m$ at $s=1$,
\begin{equation*}
    c_{-m}(F) = \lim_{s\to 1} (s-1)^m F(s) \neq 0.
\end{equation*}
Writing $F(s+1) = (s^{-m}) (s^m F(s+1))$, we get
\begin{equation}\label{main-integral}
    \frac{1}{2\pi i} \int_{\mathcal{C}} \frac{x^s}{s} \log F(1+s) \, ds = -\frac{m}{2\pi i} \int_{\mathcal{C}} \frac{x^{s}}{s} \log s\, ds + \log c_{-m}(F).
\end{equation}
The integral on the right hand side is
\begin{align}\label{integral-1}
    \int_{\mathcal{C}} \frac{x^{s}}{s} \log s\, ds &= \int_{-\pi}^{\pi} \frac{x^{u'e^{i\theta}}}{u'e^{i\theta}} (\log u'e^{i\theta}) (iu'e^{i\theta}) \, d\theta \\
    & = i(\log u')\int_{-\pi}^{\pi} e^{u' e^{i\theta}\log x} d\theta - \int_{-\pi}^{\pi} \theta e^{u' e^{i\theta}\log x} \, d\theta.
\end{align}
 By the series expansion of exponential function, we have
 
 \begin{align*}
     \int_{-\pi}^{\pi} e^{u' e^{i\theta}\log x} d\theta &=   \int_{-\pi}^{\pi} d\theta + \sum_{r=1}^{\infty} \frac{(u'\log x)^r}{r!} \int_{-\pi}^{\pi} e^{ir\theta} d\theta\\
     & = 2\pi
 \end{align*}
Similarly,
\begin{align*}
    \int_{-\pi}^{\pi} \theta e^{u' e^{i\theta}\log x} \, d\theta &= \int_{-\pi}^{\pi} \theta d\theta + \sum_{r=1}^{\infty} \frac{(u'\log x)^r}{r!} \int_{-\pi}^{\pi} \theta e^{ir\theta}\, d\theta \\
    & = \sum_{r=1}^{\infty} \left(\frac{(u'\log x)^r}{r!}\right) \left(\frac{(-1)^r 2\pi}{ir}\right)\\
    & = \frac{2\pi}{i} \sum_{r=1}^{\infty} \frac{(-1)^r}{r!} \int_0^{u'\log x} w^{r-1} dw\\
    & = \frac{2\pi}{i}\int_0^{u'\log x} \frac{e^{-w} -1}{w}\, dw.
\end{align*}
But the Euler-Mascheroni constant $\gamma$ satisfies the identity
 \begin{equation*}
     \gamma = \int_0^1 \frac{1-e^{-w}}{w} dw - \int_{1}^{\infty} \frac{e^{-w}}{w} dw.
 \end{equation*}
Thus, we have
\begin{align}\label{integral-4}
    \int_0^{u'\log x} \frac{e^{-w} -1}{w}\, dw &= \gamma + \int_1^{u'\log x} \frac{dw}{w} - \int_{u'\log x}^{\infty} \frac{e^{-w}}{w}\, dw\nonumber\\
    & = \gamma + \log \log x + \log u' + O\left(e^{-C_F\sqrt{\log x}}\right).
\end{align}
Combining the estimates above \eqref{integral-1}-\eqref{integral-4}, we get
\begin{equation*}
    \log F(1;x) = \log c_{-m}(F) + m\gamma + m\log \log x + O\left(e^{-C_F\sqrt{\log x}}\right).
\end{equation*}
Taking exponential on both sides and using the fact that $e^y = 1+O(y)$ for $|y|<1$, we are done.

 \end{proof}
\section{\bf Proof of the main theorem}

For $F\in\mathbb{G}$, define
\begin{equation*}
    F(s;Y) := \prod_{p\leq Y} \prod_{j=1}^k \left(1 - \frac{\alpha_j(p)}{p^s}\right)^{-1}.
\end{equation*}
We use the following approximation lemma.

\begin{lemma}\label{approximation-lemma}
For large $T$,
\begin{equation*}
    F(1+it) = F(1+it;Y) \left( 1 + O\left(\frac{1}{(\log T)^{10}}\right)\right),
\end{equation*}
for $Y= \exp((\log T)^{10})$ and $T^{1/10} \leq |t| \leq T$.
\end{lemma}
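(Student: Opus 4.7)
The plan is to adapt the scheme of Theorem~\ref{generalized-mertens-thm} to evaluate the logarithmic difference $\log F(1+it) - \log F(1+it;Y)$ for $t\neq 0$ rather than $\log F_x(1)$ at $t=0$. Since both $|\log F(1+it)|$ and $|\log F(1+it;Y)|$ are $O(\log\log T)$---the latter by Theorem~\ref{generalized-mertens-thm}, the former by Lemma~\ref{zeros-lemma-titchmarsh} applied to $F$ in the spirit of the estimate leading to \eqref{estimate3}---it suffices to prove the additive bound
\[
\log F(1+it) - \log F(1+it;Y) \ll (\log T)^{-10},
\]
whereupon exponentiation yields the multiplicative statement of the lemma.

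In $\Re(s)>1$ this difference equals $\sum_{p>Y,\,r\geq 1}\Lambda_F(p^r)/p^{r(1+it)}$. The $r\geq 2$ contribution is $\ll Y^{-1}$, and the same splitting as in \eqref{Fx(1)-defn} shows that $\sum_{p\leq Y,\,r\geq 1}\Lambda_F(p^r)/p^{r(1+it)}$ and $\sum_{n\leq Y}\Lambda_F(n)/n^{1+it}$ differ by $O(Y^{-1/2})$. Hence the task reduces to
\[
\log F(1+it) - \sum_{n\leq Y}\frac{\Lambda_F(n)}{n^{1+it}} \ll (\log T)^{-10}.
\]

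For this I would use the truncated Perron formula with $c = 1/\log Y$ and $T_0 = T^2$:
\[
\sum_{n\leq Y}\frac{\Lambda_F(n)}{n^{1+it}} = \frac{1}{2\pi i}\int_{c-iT_0}^{c+iT_0}\log F(1+it+w)\,\frac{Y^w}{w}\,dw + O\bigl(T^{-2}\log Y\bigr),
\]
where the error is controlled via $|\Lambda_F(n)|\leq k$. I would then shift the contour to $\Re(w) = -c'$ with $c' = c_F/(3\log T)$. Since $|\Im(1+it+w)|\leq 2T^2$ throughout the rectangle, condition (b) places the whole shifted contour inside the zero-free region, so the simple pole of $Y^w/w$ at $w=0$ is the only singularity crossed, and its residue is exactly $\log F(1+it)$.

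The remaining estimates are routine imitations of those in Theorem~\ref{generalized-mertens-thm}. On the horizontal segments $\Im(w)=\pm T_0$, the growth condition (c) together with Lemma~\ref{zeros-lemma-titchmarsh} yields $|\log F(1+it+w)|\ll(\log T)^{2}$ (as in \eqref{estimate1}), and the factor $1/|w|\leq T_0^{-1}=T^{-2}$ supplies ample decay, so these segments contribute $O((\log T)^{12}/T^{2})$. On the shifted vertical line $\Re(w)=-c'$ one has $|Y^w| = \exp(-c_F(\log T)^{9}/3)$, while Lemma~\ref{zeros-lemma-titchmarsh} (used as in the derivation of \eqref{estimate3}) gives $|\log F(1+it+w)|\ll\log(|t|+|w|+2)\ll\log T$, so the vertical contribution is super-polynomially small. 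The main technical obstacle will be handling the branch singularity of $\log F$ at $w=-it$, which lies inside the rectangle since $|t|\leq T<T_0$: I would deal with this by writing $\log F(s)=-m\log(s-1)+\log G(s)$ where $G(s)=(s-1)^{m}F(s)$ is holomorphic and non-vanishing in the shifted rectangle, so that $\log G$ is single-valued and the $-m\log(s-1)$ piece can be treated by a small semicircular indentation around $w=-it$ whose contribution vanishes with its radius. This piece of bookkeeping, and the uniformity of the combined zero-free region/growth bound for $t\in[T^{1/10},T]$, are the only places where care is needed.
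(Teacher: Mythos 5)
Your overall scheme---write the tail as a Perron-type integral of $\log F(1+it+w)\,Y^w/w$, shift the contour into the zero-free region of condition~(b), and bound the shifted pieces via condition~(c) together with Lemma~\ref{zeros-lemma-titchmarsh}---is essentially the same as the paper's proof. The key difference is in the choice of the truncation height $T_0$, and yours creates a complication that the paper deliberately avoids: the paper takes $T_0=T^{1/10}/2$ and restricts to $|t_0|\geq T^{1/10}$, so that the singularity of $\log F(1+it_0+s)$ at $s=-it_0$ (coming from the pole of $F$ at $1$) satisfies $|\Im(-it_0)|=|t_0|>T_0$ and therefore lies \emph{outside} the integration rectangle. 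By taking $T_0=T^2>|t|$ you place that branch point inside the rectangle, which is precisely what the paper's parameter choice is engineered to prevent.

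You notice the branch point, but your proposed fix is incomplete. A small semicircular indentation about $w=-it$ only handles a branch point \emph{on} the contour; when you deform from $\Re(w)=c>0$ to $\Re(w)=-c'<0$ you pass the contour \emph{over} the branch point, and the deformation necessarily collects a branch-cut contribution, not just an indentation term. Concretely, the deformed contour must run along both sides of a cut emanating from $w=-it$, and the $2\pi i m$ jump of $-m\log(w+it)$ across that cut contributes an extra integral of the form $\int\frac{Y^{u-it}}{u-it}\,du$; in your setup this is $\ll c'/|t|\ll T^{-1/10}$ and so is harmless, but the term exists and must be accounted for---the statement that the indentation's ``contribution vanishes with its radius'' does not by itself close the argument. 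The paper's choice $T_0<|t_0|$ removes this entire issue, which is the main structural advantage you should adopt; otherwise the two arguments coincide.
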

\begin{proof}
From the Euler product of $F(s)$, we have for $\Re(s)>1$,
\begin{align*}
\log F(s)=-\sum_{p}\sum_{j=1}^k\log\left(1-\frac{\alpha_j(p)}{p^{s}}\right)=\sum_{p}\sum_{j=1}^k\sum_{l}\frac{\alpha_j(p)^l}{lp^{ls}}.
\end{align*}
Let $t_0>0$ and let $\alpha>0$ be any sufficiently large constant.
Define 
\[\sigma_0:=\frac{1}{\alpha \log T}, \ \sigma_1:=\frac{1}{(\log T)^{20}} \ \text{ and } \ T_0:=\frac{T^{1/10}}{2}.\]
Applying Perron's summation formula as in \cite[Theorem~II.2.2]{tenen}, we get
\begin{equation*}\label{eq:perronfnl}
\int_{\sigma_1-iT_0}^{\sigma_1+iT_0}\log F(1+it_0+s)\frac{Y^s}{s} ds=-\sum_{p\leq Y}\sum_{j=1}^k\log\left(1-\frac{\alpha_j(p)}{p^{1+it_0}}\right)+O\left(\frac{1}{(\log T)^{10}}\right).
\end{equation*}
Now, we shift the path of integration to the left. By the zero-free region of $F\in\mathbb{G}$, the only pole of the above integrand in $\Re(s)\geq \sigma_0$ and $\Im(s)\leq T_0$ is at $s=0$. Therefore, we have 
\begin{equation}\label{eq:perronfnl2}
\log F(1+it_0)=-\sum_{p\leq Y}\sum_{j=1}^k\log\left(1-\frac{\alpha_j(p)}{p^{1+it_0}}\right)+O\left(\frac{1}{(\log T)^{10}} + \int_{\mathcal C}\log F(1+it_0+s)\frac{Y^s}{s} ds\right),    
\end{equation}
where $\mathcal{C}$ is the contour joining $\sigma_0-iT_0, \sigma_1 - iT_0, \sigma_1 +iT_0$ and $\sigma_0+iT_0$. Since, $|\log F (\sigma+ it)|\ll \log t$ on $\mathcal C$, we get
\begin{align}
&\int_{\sigma_1-iT_0}^{-\sigma_0-iT_0}\log F(1+it_0+s)\frac{Y^s}{s} \,ds \ll \frac{\log T}{T^{1/10}},\hspace{2mm} \int_{-\sigma_0+iT_0}^{\sigma_1 + iT_0}\log F(1+it_0+s)\frac{Y^s}{s} ds \ll \frac{\log T}{T^{1/10}}, \label{eq:perron_error_one}
\end{align}
and
\begin{align}
\int_{-\sigma_0-iT_0}^{-\sigma_0+iT_0}\log F(1+it_0+s)\frac{Y^s}{s} ds
\ll \frac{(\log T)^2}{\exp{\left(\frac{1}{\alpha}(\log T)^9\right)}},\label{eq:perron_error_two}
\end{align}
where all implied constants are absolute. Substituting the bounds from (\ref{eq:perron_error_one}) and (\ref{eq:perron_error_two}) in (\ref{eq:perronfnl2}),
for $T^{1/10}\leq t_0\leq T$, we obtain
\[\log F(1+it_0)=-\sum_{p\leq Y}\sum_{j=1}^k\log\left(1-\frac{\alpha_j(p)}{p^{1+it_0}}\right)+O\left(\frac{1}{(\log T)^{10}}\right).\]
Similarly we may argue when $t_0$ is negative. 
\end{proof}
By Lemma \ref{approximation-lemma}, it suffices to show Theorem \ref{main-theorem} for $F(1+it; Y)$. We closely follow the argument in \cite{Chris}. Set
\begin{equation*}
    X = \frac{1}{6} (\log T) (\log_2 T)
\end{equation*}
and for primes $p\leq X$ set
\begin{equation*}
    q_p = \left( 1-\frac{p}{X} \right).
\end{equation*}
Also set $q_1 = 1$ and $q_p=0$ for $p>X$. Extend the definition completely multiplicatively to define $q_n$ for all integers $n\geq 1$. Now define
\begin{equation*}
    R(t) = \prod_{p\leq X} (1- q_p p^{it})^{-1}.
\end{equation*}
Then we have
\begin{align*}
    \log (|R(t)|) & \leq \sum_{p\leq X} (\log X - \log p)\\
    &= \pi(X) \log X - \vartheta(X),
\end{align*}
 where $\pi(X)$ is the prime counting function and $\vartheta(X)$ is the first Chebyshev function. By partial summation, we know that 
 \begin{align*}
     \pi(X) \log X - \vartheta(X) = \int_2^X \frac{\pi(t)}{t} \, dt =(1+o(1)) \frac{X}{\log X}.
 \end{align*}
 By our choice of $X$, we get
 \begin{equation}\label{R(t)-bound}
     |R(t)|^2 \leq T^{1/3 + o(1)}.
 \end{equation}
From the Euler product, $R(t)$ has the following series representation
\begin{equation*}
    R(t) = \sum_{n=1}^{\infty} q_n n^{it},
\end{equation*}
and hence we get
\begin{align*}
    |R(t)|^2 = (\sum_{n=1}^{\infty} q_n n^{it})(\sum_{n=1}^{\infty} q_n n^{-it}) = \sum_{m,n=1}^{\infty} q_m q_n \left(\frac{m}{n}\right)^{it}.
\end{align*}
We have
\begin{equation*}
     F(1+it;Y) = \prod_{p\leq Y} \prod_{j=1}^k \left(1 - \frac{\alpha_j(p)p^{-it}}{p}\right)^{-1}
\end{equation*}
Since $|\alpha_j(p)|\leq 1$, we get
\begin{equation*}
      |F(1+it;Y)| \ll (\log Y)^k \ll (\log T)^{10k}.
\end{equation*}
Set $\Phi(t) := e^{-t^2}$ and recall that its Fourier transform is positive.
\medskip
Using \eqref{R(t)-bound}, we have
\begin{equation*}
    \left| \int_{|t|\geq T} F(1+it;Y) |R(t)|^2 \Phi \left(\frac{\log T}{T} t\right)\, dt \right| \ll 1,
\end{equation*}
and
\begin{equation*}
    \left| \int_{|t|\leq \sqrt{T}} F(1+it;Y) |R(t)|^2 \Phi \left(\frac{\log T}{T} t\right)\, dt \right| \ll T^{5/6 + o(1)}.
\end{equation*}

\medskip

Using the fact that $q_1=1$ and the positivity of the Fourier coefficients of $\Phi$, we also have the following lower bound

\begin{equation*}
    \int_{\sqrt{T}}^T \,|R(t)|^2\, \Phi\left(\frac{\log T}{T} t\right)\, dt \gg T^{1+o(1)}.
\end{equation*}
By a similar argument, again using the positivity of the Fourier coefficients, we have

\begin{equation*}
    \int_{-\infty}^{\infty} F(1+it; Y)|R(t)|^2 \Phi\left(\frac{\log T}{T} t\right)\, dt \geq \int_{-\infty}^{\infty} F(1+it; X)|R(t)|^2 \Phi\left(\frac{\log T}{T} t\right)\, dt.
\end{equation*}
So, we restrict ourselves to primes $p\leq X$ in the truncated Euler-product. This is to ensure both $R(t)$ and $F(1+it;X)$ have the terms with same $q$'s.\\

Write $F(1+it;X)$ as
\begin{equation*}
    F(1+it;X) := \sum_{n=1}^{\infty} a_k k^{-it},
\end{equation*}
where $a_k \geq 0$. This is because the Dirichlet coefficients of $F(s)$ are non-negative. Now define
\begin{align*}
    I_1 &:= \int_{-\infty}^{\infty} F(1+it; X)|R(t)|^2 \Phi\left(\frac{\log T}{T} t\right)\, dt\\
    &= \sum_{k=1}^{\infty} a_k \sum_{m,n=1}^{\infty} \int_{-\infty}^{\infty} k^{-it} q_m q_n \left(\frac{m}{n}\right)^{it} \Phi\left(\frac{\log T}{T} t\right) \, dt.
\end{align*}
We also define
\begin{equation*}
    I_2 := \int_{-\infty}^{\infty} |R(t)|^2 \Phi\left(\frac{\log T}{T} t\right)\, dt.
\end{equation*}
Notice that since we are working with truncated Euler-products, everything is absolutely convergent. Now, using the fact that the Fourier coefficients of $\Phi$ are positive and that $q_n$ are completely multiplicative, we get the inner sum of $I_1$ as

\begin{align*}
    \sum_{m,n=1}^{\infty} \int_{-\infty}^{\infty} k^{-it} q_m q_n \left(\frac{m}{n}\right)^{it} \Phi\left(\frac{\log T}{T} t\right) \, dt 
    & \geq \sum_{n=1}^{\infty} \sum_{k \mid m} \int_{-\infty}^{\infty} k^{-it} q_m q_n \left(\frac{m}{n}\right)^{it} \Phi\left(\frac{\log T}{T} t\right) \, dt \\
    & = q_k \sum_{n=1}^{\infty} \sum_{r=1}^{\infty} \int_{-\infty}^{\infty} q_r q_n \left(\frac{r}{n}\right)^{it} \Phi\left(\frac{\log T}{T} t\right)\, dt.
\end{align*}
Thus, we have
\begin{align}\label{I1/I2-bound}
    \frac{I_1}{I_2} & \geq \sum_{k=1}^{\infty} a_k q_k\nonumber = \prod_{p\leq X} \prod_{j=1}^k \left(1-\frac{\alpha_j(p)}{p} q_p \right)^{-1}\nonumber\\
    & = \left(\prod_{p\leq X} \prod_{j=1}^k \left( 1- \frac{\alpha_j(p)}{p} \right)^{-1}\right) \, \left(\prod_{p\leq X} \prod_{j=1}^k \left( \frac{p- \alpha_j(p)}{p-\alpha_j(p) q_p}\right)\right)
\end{align}
Using the generalized Merten's Theorem \ref{generalized-mertens-thm}, we have 
\begin{align}\label{generalized-mertens}
    \prod_{p\leq X} \prod_{j=1}^k \left( 1- \frac{\alpha_j(p)}{p} \right)^{-1} & = e^{\gamma_F} (\log X)^m + O(1)\nonumber\\ 
    &= e^{\gamma_F} (\log_2 T + \log_3 T)^m + O(1)
\end{align}
The second product in \eqref{I1/I2-bound} can be bounded as follows.
\begin{align}\label{second-product}
    -\log \left(\prod_{p\leq X} \prod_{j=1}^k \left( \frac{p- \alpha_j(p)}{p-\alpha_j(p) q_p}\right)\right) &= - \left(\sum_{p\leq X} \sum_{j=1}^k \log\left( \frac{p- \alpha_j(p)}{p-\alpha_j(p) q_p}\right)\right)\nonumber\\
    & \ll \sum_{p\leq X} \frac{1}{X}\\
    &\ll \frac{1}{\log X}.
\end{align}
From \eqref{I1/I2-bound},\eqref{generalized-mertens} and \eqref{second-product}, we get
\begin{equation*}
    \frac{I_1}{I_2} \geq e^{\gamma_F} (\log_2 T + \log_3 T)^m + O(1).
\end{equation*}
In other words, we have
\begin{equation*}
    \frac{\left|\int_{\sqrt{T}}^{T} F(1+it; X)|R(t)|^2 \Phi\left(\frac{\log T}{T} t\right)\, dt \right|}{\int_{\sqrt{T}}^{T} |R(t)|^2 \Phi\left(\frac{\log T}{T} t\right)\, dt} \geq e^{\gamma_F} (\log_2 T + \log_3 T)^m + O(1).
\end{equation*}
Hence, we conclude
\begin{equation*}
    \max_{t\in[\sqrt{T},T]} |F(1+it)| \geq e^{\gamma_F} ((\log_2 T + \log_3 T)^m - C_F).
\end{equation*}

\end{document}